
\documentclass[10 pt, leqno]{amsart}

\usepackage {amsfonts}
\usepackage{amsthm}
\usepackage{amssymb}
\usepackage{latexsym}
\usepackage{amsmath}
\usepackage{mathrsfs}

\pagestyle{plain}

\theoremstyle{plain}
\newtheorem{tw}{Theorem}[section]

\newtheorem {lem} [tw]{Lemma}

\theoremstyle{definition}

\newcommand{\bc} {\Bbb C}
\newcommand{\bn}{\Bbb N}

\newcommand{\bt}{\Bbb T}
\newcommand{\bz}{\Bbb Z}

\newcommand{\la}{\langle}
\newcommand{\ra}{\rangle}

\newcommand{\ol}{\overline}

\newcommand{\ot}{\otimes}

\newcommand{\wt}{\widetilde}

\newcommand{\alg} {\mathsf{A}}

\newcommand{\clg} {\mathsf{C}}

\newcommand {\hte} {{\textrm{ht}}}

\newcommand {\In} {{\textup{Ind}}}

\newcommand {\End} {{\textrm{End}}}

\newcommand{\tu}{\textup}

\newcommand{\Hil}{\mathsf{H}}

\newcommand{\Ind}{\mathcal{J}}

\newcommand{\Pp}{\mathcal{P}}

\newcommand{\Cun}{\mathcal{O}_N}
\newcommand{\Can}{\mathfrak{C}}

\newcommand{\mlg}{\mathsf{M}}
\newcommand{\vnUHF}{\mlg^{(0)}}
\newcommand{\nlg}{\mathsf{N}}

\newcommand{\UHF}{\mathcal{F}_N}
\newcommand{\nmasa}{\mathcal{C}_N}

\numberwithin{equation}{section}



\begin{document}

\author{Adam Skalski}
{\renewcommand{\thefootnote}{} \footnote{\emph{Permanent address of AS:} Faculty of Mathematics and Computer Science, University
of \L\'{o}d\'{z}, ul. Banacha
 22, 90-238 \L\'{o}d\'{z}, Poland.}}
\address{Department of Mathematics and Statistics,  Lancaster University, Lancaster, LA1 4YF}
 \email{a.skalski@lancaster.ac.uk}

\title{\bf Noncommutative topological entropy of endomorphisms of Cuntz algebras II}

\keywords{Noncommutative topological entropy, Cuntz algebra, polynomial endomorphisms}
\subjclass[2000]{ Primary 46L55, Secondary
37B40}

\begin{abstract}
\noindent A study of noncommutative topological entropy of gauge invariant endomorphisms of Cuntz algebras began in our earlier
work with J.\,Zacharias is continued and extended to endomorphisms which are not necessarily of permutation type. In particular
it is shown that if $\mathsf{H}$ is an $N$-dimensional Hilbert space, $V$ is an irreducible multiplicative unitary on $\mathsf{H}
\otimes \mathsf{H}$ and $F: \mathsf{H} \otimes \mathsf{H} \to \mathsf{H} \otimes \mathsf{H}$ is the tensor flip, then the
Voiculescu entropy of the Longo's canonical endomorphism $\rho_{VF} \in \textup{End}(\mathcal{O}_N)$ is equal to $\log N$.
\end{abstract}

 \maketitle

Noncommutative topological entropy for transformations of $C^*$-algebras introduced by D.\,Voiculescu in \cite{Voic} is an
interesting invariant generalising the classical topological entropy of continuous transformations of compact spaces
(\cite{book}); its value for an endomorphism $\rho$ will be denoted by $\hte(\rho)$. The most studied from the point of view of
the Voiculescu entropy class of transformations is the family of various noncommutative generalisations of classical shifts, in
particular the canonical shift on the Cuntz algebra $\Cun$. In \cite{LMP} together with J.\,Zacharias we computed the Voiculescu
entropy of certain permutation endomorphisms of $\Cun$ generalising the canonical shift.

The basic method of obtaining the lower estimates for the entropy $\hte(\rho)$ of a given endomorphism $\rho$ of a $C^*$-algebra
$\alg$ is based on finding a commutative $C^*$-subalgebra left invariant by $\rho$ on which $\rho$ is induced by a classical
transformation whose entropy can be computed by an application of standard dynamical systems' techniques. Having said that, the
examples of bitstream shifts studied in \cite{book} show that it can happen that \[\hte(\rho) > \hte_c(\rho) =  \sup
\left\{\hte(\rho|_{\clg}): \clg \textrm{ is a commutative } \rho\textrm{-invariant } C^*\textrm{-subalgebra of  } \alg\right\}\]
(see \cite{note}). Each permutation endomorphism of $\Cun$ leaves the so-called canonical masa $\nmasa\subset \Cun$ invariant.
Although the analysis in \cite{LMP} showed that there exist examples when $\hte(\rho) > \hte(\rho|_{\nmasa})$, in each case we
still had $\hte(\rho)=\hte_c(\rho)$ and the actual value of entropy was achieved already on some $\rho$-invariant standard masa
(i.e.\ a masa arising from $\nmasa$ by a change of coordinates). Moreover the only values of entropy obtained explicitly for
endomorphisms of $\Cun$ were equal to $\log k$, where $k \in \bn$.

In view of the above discussion it is interesting to investigate on one hand the existence of invariant standard masas for
endomorphisms of a Cuntz algebra and on the other to seek new ways of establishing lower bounds for Voiculescu entropy for such
endomorphisms. The first problem was studied by J.H.\,Hong, W.\,Szyma\'nski and the author in the recent paper \cite{HS}. The
current work is devoted to making progress in the second. The way forward was suggested by connections between the values of
index of permutation endomorphisms and their entropy discussed in \cite{cs2} and resembling the connections between the index of
a subfactor  of a finite factor and the Connes-St\o rmer entropy of a canonical shift of Ocneanu (see for example \cite{Hiai}).

We begin the note  by introducing the notations and recalling basic statements needed in what follows. In Section 2 we analyse a
class of examples of endomorphisms of $\Cun$ introduced by M.\,Izumi in \cite{Izu1}. Each endomorphism $\rho$ in this class,
although not a permutation endomorphism itself, is at the same time a square root of a permutation endomorphism and a composition
of a permutation endomorphism with a Bogolyubov automorphism $\beta$. Moreover $\hte(\rho)=\frac{1}{2} \log N$ and $\hte (\rho
\circ \beta)=\log N$ (note that in \cite{HS} it was shown that when $N=2$ the corresponding endomorphism leaves no standard masa
invariant). In Section 3 we prove that the computation of the entropy of the Izumi's examples can be interpreted as a special
instance of a general fact: each irreducible multiplicative unitary $V$ on a finite-dimensional Hilbert space $\Hil$
(\cite{Baaj}) leads to an endomorphism of a corresponding Cuntz algebra whose Voiculescu entropy is equal to the logarithm of the
dimension of $\Hil$.

\section{Notations and preliminaries}

Let $N$ in $\bn$ and let $\Cun$ denote the Cuntz algebra, i.e.\ the $C^*$-algebra generated by $N$ isometries with orthogonal
ranges summing to $1$ (\cite{Cuntz1}). The isometries generating $\Cun$ will be usually denoted by $S_1,\ldots,S_N$; composition
of several generating isometries will be expressed via a multi-index type notation (see for example \cite{LMP}). For each $k \in
\bn$ we denote the set of multi-indices of length $k$ by $\Ind_k:=\{(j_1,\ldots,j_k):j_1, \ldots,j_k \in \{1,\ldots,N\}\}$ and
put $\Ind=\bigcup_{k \in \bn} \Ind_k$. There is a well-known $1-1$ correspondence between the unital endomorphisms of $\Cun$
(denoted further by $\End(\Cun)$) and unitaries in $\Cun$, first observed in \cite{Cuntz}: given $U \in \mathcal{U}(\Cun)$ the
associated endomorphism is determined by
\[ \rho_U (S_i) = US_i,\;\;\; i=1,\ldots,N,\]
and conversely given $\rho \in \tu{End}(\Cun)$ its associated unitary is given by
\[U_{\rho}= \sum_{i=1}^n \rho(S_i) S_i^*.\]
If a unitary $U\in \Cun$ is a linear combination of elements of the form
$S_iS_j^*$ ($i,j=1,\ldots,N$), the associated transformation $\rho_U$ is an automorphism, called a \emph{Bogolyubov} automorphism
(its action corresponds to the change of coordinates in the Hilbert space $\bc^N$ underlying the Cuntz algebra in the approach
due to R.\,Longo and J.\,Roberts). In particular the family of automorphisms $\{\rho_{t}: t \in \bt\}$ provides an action of
$\bt$ on $\Cun$, called the gauge action (for Bogolyubov automorphisms we will usually write $\alpha_U$ instead of $\rho_U$).
The fixed point space of this action is denoted by $\UHF$ and is generated as a normed space by the union $\bigcup_{k \in \bn}
\UHF^k$, where $\UHF^k=\tu{Lin}\{S_{J} S_{K}^*: J,K \in \Ind_k\}$. It is easy to check that $\UHF^k$ is isomorphic to
$M_{N^k}\approx M_N^{\ot k}$ and the corresponding embeddings $\UHF^k \subset \UHF^{k+1}$ are compatible with the usual unital
embeddings on the matrix level, so that $\UHF$ is a UHF algebra.  The \emph{canonical masa} (maximal abelian subalgebra) in
$\Cun$  is the algebra $\nmasa$ generated by $\{s_I s_I^*: I \in \Ind\}$; a \emph{standard masa} is a $C^*$-subalgebra of $\Cun$
equal to $\alpha(\nmasa)$ for some Bogolyubov automorphism $\alpha$.

An interesting class of \emph{permutation endomorphisms} of $\Cun$ having a relatively simple combinatorial description was
introduced in \cite{Kawa} and was subsequently studied for example in \cite{Wojtek} and \cite{CS}. Let $\Pp_k$ denote the set of
all permutations of the set $\Ind_k$. The permutation endomorphism given by $\sigma \in \Pp_k$ is the endomorphism associated to
a unitary $U_{\sigma}= \sum_{J \in \Ind_k} S_{\sigma(J)} S_J^*$. The canonical shift on $\Cun$ is the permutation endomorphism
associated with the flip unitary $F=\sum_{i,j=1}^N S_i S_j S_i^* S_j^*$ and denoted further by $\Phi$. Every standard masa is
isomorphic to the algebra of continuous functions on $\Can$, the full shift on $N$ letters (\cite{classical}).

The standard topological entropy of a continuous transformation $T$ of a compact space (\cite{classical}) will be denoted by
$h_{\tu{top}}(T)$.  For the definition and basic properties of the Voiculescu's noncommutative topological entropy of an
endomorphism (or a completely positive map) of a (nuclear) $C^*$-algebra we refer to the original paper \cite{Voic} or to the
monograph \cite{book}. All the information needed to read this note can be also found in \cite{LMP}.

\section{Entropy of an endomorphism coming from a real sector -- the square root of a canonical endomorphism}\label{Izumi}

In \cite{Izu1} M.\,Izumi studied certain explicit examples of endomorphisms of Cuntz algebras motivated by the subfactor theory.
One class of them (Example 3.7 of \cite{Izu1}) was constructed in the following way: let $G$ be a finite abelian group of cardinality $N\geq 2$ with
the (symmetric) duality bracket $\la\cdot, \cdot \ra:G \times G \to \bt$ satisfying the usual conditions ($g,g',h \in G$)
\[ \ol{\la g, h\ra} = \la -g, h\ra, \;\; \sum_{h \in G} \la h, g\ra =
 \begin{cases}0 & \tu{if }  g\neq e \\ N & \tu{if } g = e\end{cases} ,\]
\[ \la g,h \ra \la g',h\ra = \la g+g',h \ra, \;\; \la g, h \ra = \la h, g\ra\]
(the group operation in $G$ will be written  additively). If $G=\bz/_{N\bz}$ one can put $\la k, l\ra:=\exp(\frac{2\pi
i(kl)}{N})$. We will use elements of $G$ as indices of generating isometries in $\Cun$. Define unitaries $U(g) \in \UHF \subset
\Cun$ ($g \in G$) by
\[U(g) = \sum_{h \in G} \la g,h\ra S_h S_h^*,\]
and the endomorphism $\rho\in \tu{End}(\Cun)$ by
\[ \rho (S_g) = \frac{1}{\sqrt{N}} \sum_{h \in G} \la g, h \ra S_h U(g)^*.\]
The endomorphism $\rho$ was studied in detail in \cite{HS}. It does not leave $\nmasa$ invariant. The unitary associated with
$\rho$ is equal to
\begin{equation} V= \frac{1}{\sqrt{N}} \sum_{g,h,l \in G} \la g,h-l \ra S_h S_l S_l^* S_g^*.\label{Izunit}\end{equation}

Define for each $h \in G$
\[\wt{S}_h = \frac{1}{\sqrt{N}} \sum_{a \in G} \la h, a\ra S_a\]
and let $\beta \in \tu{Aut}(\Cun)$ be given by
\[\beta(S_h) = \wt{S}_h, \;\;\; h \in G.\]
It is easy to see that $\beta$ is a Bogolyubov automorphism.

\begin{lem} \label{Izuent0}
The endomorphism $\rho':=\rho \circ \beta$ is a permutation endomorphism of $\Cun$ given by the formula (notations as above)
\[ \rho'(S_h) = \sum_{g  \in G} S_g S_{h+g} {S_{h+g}}^*, \;\;\; h \in G.\]
Moreover  $\hte(\rho') = \log N$.
\end{lem}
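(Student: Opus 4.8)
The plan is to first pin down $\rho'$ by a direct computation, and then to establish the two matching bounds for $\hte(\rho')$ separately. Since $\beta$ and $\rho$ are linear, $\rho'(S_h)=\rho(\wt{S}_h)=\frac{1}{\sqrt{N}}\sum_{a\in G}\la h,a\ra\,\rho(S_a)$; substituting the defining formula for $\rho(S_a)$ and writing $U(a)^{*}=\sum_{c\in G}\la -a,c\ra\,S_cS_c^{*}$ one gets
\[ \rho'(S_h)=\frac{1}{N}\sum_{a,b,c\in G}\la h,a\ra\la a,b\ra\la -a,c\ra\; S_bS_cS_c^{*}. \]
By symmetry of the bracket together with the bicharacter relations the coefficient equals $\la h+b-c,\,a\ra$, and since $\sum_{a\in G}\la g,a\ra$ equals $N$ when $g=e$ and $0$ otherwise, summing over $a$ forces $c=h+b$ and collapses the expression to $\rho'(S_h)=\sum_{g\in G}S_gS_{h+g}S_{h+g}^{*}$, which is the asserted formula. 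To recognise $\rho'$ as a permutation endomorphism I would read off its associated unitary $U_{\rho'}=\sum_{h\in G}\rho'(S_h)S_h^{*}=\sum_{h,g\in G}S_{(g,\,h+g)}S_{(h,\,h+g)}^{*}$; after the substitution $J=(h,h+g)$ this becomes
\[ U_{\rho'}=\sum_{J\in\Ind_2}S_{\sigma(J)}S_J^{*},\qquad \sigma(j_1,j_2)=(j_2-j_1,\,j_2), \]
and since $\sigma$ is an involution of $\Ind_2$, hence a permutation, $\rho'=\rho_{U_\sigma}$ is the permutation endomorphism determined by $\sigma\in\Pp_2$.

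For the lower estimate I would use the canonical masa. Being a permutation endomorphism, $\rho'$ leaves $\nmasa$ invariant, so $\rho'|_{\nmasa}$ is a unital $*$-endomorphism of $\nmasa\cong C(G^{\bn})$ and is therefore implemented by a continuous map $T\colon G^{\bn}\to G^{\bn}$, $f\mapsto f\circ T$. Matching the $\rho'$-images of the cylinder projections with the cylinders of $G^{\bn}$ --- already $\rho'(S_hS_h^{*})=\sum_{g\in G}S_{(g,h+g)}S_{(g,h+g)}^{*}$ gives $T^{-1}([h])=\{x:\ x_1-x_0=h\}$, and the analogous identities for longer multi-indices complete the picture --- identifies $T$ as the finite-difference map $(Tx)_n=x_{n+1}-x_n$. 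It then remains to compute $h_{\tu{top}}(T)$. Since $(T^kx)_0=\sum_{j=0}^{k}(-1)^{k-j}\binom{k}{j}x_j$, the assignment $x\mapsto\big((T^kx)_0\big)_{0\le k<n}$ depends only on $(x_0,\dots,x_{n-1})$ and is given there by a unitriangular integer matrix, which is invertible over $\bz$ and hence acts invertibly on $G^{n}$; consequently, for a suitable fixed $\varepsilon>0$, two points of $G^{\bn}$ fail to be $(n,\varepsilon)$-separated precisely when they agree on their first $n$ coordinates, so a maximal $(n,\varepsilon)$-separated set has $N^{n}$ elements and $h_{\tu{top}}(T)=\log N$. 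By monotonicity of the Voiculescu entropy under restriction to invariant $C^{*}$-subalgebras and its agreement with classical topological entropy on commutative $C^{*}$-algebras (\cite{Voic,book}), $\hte(\rho')\ge\hte(\rho'|_{\nmasa})=h_{\tu{top}}(T)=\log N$.

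For the reverse inequality $\hte(\rho')\le\log N$ I would invoke the analysis of permutation endomorphisms arising from $\Pp_2$ in \cite{LMP}: iterating the formula for $\rho'$ shows that $(\rho')^{n}(S_i)\in\tu{Lin}\{S_JS_K^{*}:J\in\Ind_{n+1},\,K\in\Ind_n\}=\sum_{l=1}^{N}S_l\,\UHF^{n}$, so that $\bigcup_{k<n}(\rho')^{k}\big(\tu{Lin}\{1,S_1,\dots,S_N,S_1^{*},\dots,S_N^{*}\}\big)$ is, up to a single generating isometry acting on one side, approximately contained in $\UHF^{n}\cong M_{N^{n}}$; a completely positive factorisation of rank $O(N^{n})$, of the kind used for the canonical shift in \cite{LMP}, then yields $\hte(\rho')\le\log N$. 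Combining the two estimates gives $\hte(\rho')=\log N$. I expect the genuinely new ingredient to be the lower bound --- realising $\rho'|_{\nmasa}$ as the finite-difference map and computing its topological entropy --- while the main technical obstacle, should one wish to keep the upper bound self-contained, is controlling the unbalanced summand $\sum_{l}S_l\,\UHF^{n}$ so that it does not inflate the rank of the approximating maps beyond $O(N^{n})$.
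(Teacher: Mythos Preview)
Your argument is correct and follows the same route as the paper: the explicit formula for $\rho'$, the upper bound via the fact that the associated unitary lies in $\UHF^2$ (Theorem~2.2 of \cite{LMP}), and the lower bound by identifying $\rho'|_{\nmasa}$ with the finite-difference map $(Tw)_k=w_{k+1}-w_k$ on $\Can$ and showing $h_{\tu{top}}(T)=\log N$. The only difference is that you supply self-contained computations (the character-sum derivation of $\rho'(S_h)$ and the unitriangular-matrix/separated-set argument for $h_{\tu{top}}(T)$) where the paper cites \cite{HS} and Lemma~3.2 of \cite{LMP}, respectively.
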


\begin{proof}
The first statement is shown in \cite{HS}.  As the endomorphism $\rho'$ is induced by a unitary in $\UHF^2$, Theorem 2.2 of
\cite{LMP} implies that $\hte(\rho') \leq \log N$. As $\rho'$ is a permutation endomorphism, it leaves $\nmasa$ invariant and
$\rho'|_{\nmasa}$ is induced by a continuous transformation $T_{\rho'}$ of $\Can$. Note that we have ($h,k \in G$)
\[ \rho'(S_h S_h^*) = \sum_{g \in G} S_g S_{h+g} {S_{h+g}}^* S_g ^*,\]
\[ \rho' (S_h S_k S_k^* S_h^*) = \sum_{g \in G} S_g S_{h+g} S_{k+h+g} S_{k+h+g}^* S_{h+g}^* S_{g}^*,\]
and so on. The analysis of the transformations on cylinder sets (see \cite{LMP} or \cite{Wojtek}) implies that $T_{\rho'}$ is
given by the formula
\[ (T_{\rho'}(w))_k = w_{k+1}  - w_k, \;\;\; w:=(w_n)_{n=1}^{\infty}\in \Can, k \in \bn.\]
Hence  an application of Lemma 3.2 of \cite{LMP} yields that $h_{\tu{top}}(T_{\rho'}) = \log N$ so that $\hte (\rho') \geq
\hte(\rho'|_{\nmasa})= h_{\tu{top}}(T_{\rho'})= \log N$.
\end{proof}

\begin{tw} \label{Izumient}
The Voiculescu entropy of $\rho$ is equal to $\frac{1}{2}\log N$.
\end{tw}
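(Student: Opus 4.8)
The plan is to combine the upper bound coming from the structure of $\rho$ with a lower bound obtained by relating $\rho$ to the permutation endomorphism $\rho'$ of Lemma~\ref{Izuent0}. For the upper bound: although $\rho$ is not induced by a unitary in $\UHF^2$, it is induced by the unitary $V$ of \eqref{Izunit} which lies in $\UHF^2$ \emph{up to the scalar} $\frac{1}{\sqrt N}$ --- more to the point, $V \in \UHF$ and $\rho$ is a gauge-invariant endomorphism, so one can still invoke the general machinery of \cite{LMP} for gauge-invariant endomorphisms. The key observation is that $\rho^2 = \rho \circ \rho$ should be, up to a Bogolyubov twist, the permutation endomorphism $\rho'$ composed with something controllable; more cleanly, since $\rho' = \rho \circ \beta$ we have $\rho = \rho' \circ \beta^{-1}$, and $\beta^{-1}$ is a Bogolyubov automorphism (hence $\hte(\beta^{-1}) = \hte(\beta) = 0$ and $\beta^{-1}$ does not affect the entropy of iterates in a way that changes the exponential growth rate). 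But that would give $\hte(\rho) = \hte(\rho')=\log N$, which is false, so the relation must be used more carefully: the point is that $\rho$ is a \emph{square root} of a permutation endomorphism. I would show $\rho^2 = \alpha \circ \Phi$ (or $\rho^2 = \alpha \circ \rho'$) for a Bogolyubov automorphism $\alpha$ and an appropriate permutation endomorphism, so that $\hte(\rho^2) = \log N$, and then use $\hte(\rho^2) = 2\,\hte(\rho)$, valid because $\hte$ is, for these algebraic transformations, genuinely a rate (Voiculescu entropy satisfies $\hte(\rho^n) = n\,\hte(\rho)$ for endomorphisms under the hypotheses in play here, cf.\ \cite{Voic, book}). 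This immediately yields $\hte(\rho) = \frac{1}{2}\log N$ provided one also knows $\hte(\rho^2) = \log N$, i.e.\ provided $\rho^2$ is conjugate (by a Bogolyubov automorphism, which leaves entropy invariant) to a permutation endomorphism induced by a unitary in $\UHF^2$ and realising entropy exactly $\log N$ on a standard masa.

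So concretely the steps are: (1) compute $\rho^2$ explicitly from the formula $\rho(S_g) = \frac{1}{\sqrt N}\sum_h \la g,h\ra S_h U(g)^*$ --- this is a finite but slightly involved calculation using the commutation relations $U(g) S_h = \la g,h\ra S_h$ and the orthogonality relations for the bracket --- and identify the unitary $V_2 \in \Cun$ with $\rho^2 = \rho_{V_2}$; (2) show $V_2 = \beta'(W)$ for a Bogolyubov automorphism $\beta'$ and $W \in \UHF^2$ a permutation unitary, equivalently show that $\rho^2 = \alpha \circ \sigma$ with $\alpha$ Bogolyubov and $\sigma$ a permutation endomorphism (I expect $\sigma$ to be closely related to the canonical shift $\Phi$ or to $\rho'$); (3) deduce from Theorem~2.2 of \cite{LMP} that $\hte(\rho^2) \le \log N$, and from the permutation-endomorphism analysis (as in the proof of Lemma~\ref{Izuent0}) together with invariance of $\hte$ under the Bogolyubov conjugacy that $\hte(\rho^2) = \hte(\sigma) \ge \hte(\sigma|_{\nmasa}) = \log N$; (4) invoke $\hte(\rho^2) = 2\hte(\rho)$ to conclude $\hte(\rho) = \frac{1}{2}\log N$.

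The main obstacle I anticipate is Step (2): it is not automatic that an endomorphism induced by a unitary in $\UHF^2$ whose ``square'' is again nicely structured is itself conjugate to a permutation endomorphism, and the bookkeeping in expressing $\rho^2$ as $\alpha\circ\sigma$ --- in particular identifying exactly which permutation $\sigma$ arises and checking that the leftover unitary factor is genuinely of Bogolyubov (degree-one) type --- requires care with the group characters. A secondary subtlety is justifying $\hte(\rho^n) = n\,\hte(\rho)$ in full rigour for $\rho$ (Voiculescu's entropy is in general only subadditive, $\hte(\rho^n)\le n\hte(\rho)$; the reverse is the content one really needs and it does hold for unital endomorphisms because the approximating nets for $\rho^n$ can be built from those for $\rho$, but this should be cited precisely). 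An alternative route that sidesteps Step (2) entirely would be to get the lower bound $\hte(\rho) \ge \frac{1}{2}\log N$ directly: find a $\rho$-invariant (or $\rho^2$-invariant) commutative subalgebra --- not a standard masa, since \cite{HS} rules those out when $N=2$, but perhaps a non-standard abelian subalgebra, or apply the lower-bound technique via the commutant/relative-entropy method hinted at in the introduction via the index connection of \cite{cs2} --- on which the induced classical system has topological entropy $\frac{1}{2}\log N$; I would attempt the $\rho^2$ argument first since it reuses Lemma~\ref{Izuent0} almost verbatim, and fall back on a direct construction only if the conjugacy in Step (2) proves recalcitrant.
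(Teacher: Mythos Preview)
Your overall strategy --- reduce to $\hte(\rho^2)$ via $\hte(\rho)=\tfrac{1}{2}\hte(\rho^2)$, then compute $\hte(\rho^2)=\log N$ by combining the $\UHF^2$ upper bound with a masa lower bound --- is exactly the paper's approach. The main simplification you are missing is that Step~(2) is trivial: the computation (done in \cite{HS} and quoted in the paper) gives
\[
\rho^2(S_g)=\sum_{k\in G} S_k S_{g+k} S_k^*,
\]
so $\gamma:=\rho^2$ is \emph{already} a permutation endomorphism with unitary $V_\gamma=\sum_{g,h} S_g S_{h+g} S_g^* S_h^*\in\UHF^2$; no Bogolyubov factor $\alpha$ appears. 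The paper then analyses $\gamma|_{\nmasa}$ exactly as in Lemma~\ref{Izuent0}, obtaining the induced map $(T_\gamma(w))_k=w_1+w_{k+1}$ on $\Can$ with $h_{\tu{top}}(T_\gamma)=\log N$.

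One genuine warning: your fallback reasoning in Step~(3), that ``invariance of $\hte$ under the Bogolyubov conjugacy'' would give $\hte(\alpha\circ\sigma)=\hte(\sigma)$, is false as stated --- $\alpha\circ\sigma$ is a composition, not a conjugation, and the paper itself furnishes the counterexample: $\rho'=\rho\circ\beta$ with $\hte(\rho')=\log N\neq\tfrac{1}{2}\log N=\hte(\rho)$. Had a nontrivial $\alpha$ actually appeared, you would need a different argument (e.g.\ transport the masa analysis through $\alpha$ to a standard masa), not the one you sketched. Fortunately the issue does not arise.
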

\begin{proof}
It is easily checked that $ \hte(\rho)=\frac{1}{2}\hte(\rho^2)$. Write $\gamma:=\rho^2$. It suffices to show that $\hte(\gamma) =
\log N$. This will follow from the general result in Theorem \ref{genthm}, but here we can provide a direct proof, as $\gamma$ is
a permutation endomorphism given by the formula ($g \in G$)
\[\gamma(S_g)  = \sum_{k \in G}   S_k      S_{g+k} S_{k}^*\]
(see \cite{HS}).  As the associated unitary $V_{\gamma}:=\sum_{g,h \in G}   S_g      S_{h+g} S_{g}^* S_h^*$ belongs to $\UHF^2$,
Theorem 2.2 of \cite{LMP} implies that $\hte(\gamma) \leq \log N$.

Examine the action of $\gamma$ on $\nmasa$. For each $n\in \bn$ and $g_1, \ldots,g_n \in G$
\[ \gamma(S_{g_1} \cdots S_{g_n} S_{g_n}^* \cdots S_{g_1}^*) = \sum_{h \in G} S_h S_{g_1+h} \cdots S_{g_n+h} S_{g_n+h}^* \cdots
S_{g_1+h}^* S_h^*,\] so that once again analysing the cylinder sets we see that $\gamma|_{\nmasa}$ is induced by the continuous
transformation defined by
\[ (T_{\gamma} (w))_k = w_1 + w_{k+1}, \;\; w =(w_n)_{n=1}^{\infty}\in \Can, k \in \bn.\]
Thus appealing to Lemma 3.2 in \cite{LMP} yields $\hte(\gamma|_{\nmasa})= h_{\tu{top}}(T_{\gamma}) = \log N.$ This ends the
proof.
\end{proof}

Note that Lemma \ref{Izuent0} and Theorem \ref{Izumient} yield an example of an endomorphism $\rho$ of $\Cun$ and a Bogolyubov
automorphism $\beta$ such that $\hte(\rho \circ \beta) \neq \hte(\rho)$ (although, as follows from \cite{free}, the Voiculescu
entropy of each Bogolyubov automorphism of $\Cun$ is equal $0$).

Let $G=\bz/_{2\bz}=\{0,1\}$ with the natural duality bracket ($\la 1, 1 \ra = -1$, all other brackets take value $1$). The Izumi
endomorphism discussed above is then given by \begin{equation} \label{G2}\rho(S_0) = \frac{1}{\sqrt{2}} (S_0 + S_1), \;\;\;
\rho(S_0) = \frac{1}{\sqrt{2}} (S_0 S_0 S_0^* + S_1 S_1 S_1^*  - S_1 S_0 S_0^* - S_0 S_1 S_1^*).\end{equation}

In Section 6 of \cite{HS} we showed that $\rho$ defined via the formulas in \eqref{G2} does not leave any standard masa
invariant. This naturally leads to the following closely connected questions: can one characterise masas in $\Cun$ left invariant
by $\rho$? Do we have $\hte(\rho)= \hte_c(\rho)$?

It would be very interesting to investigate the entropy of other examples of real sectors given in \cite{Izu1}. This would
require completely new methods even for obtaining upper estimates, as Theorem 2.2 of \cite{LMP} applies only to the endomorphisms
associated to unitaries in $\UHF$ and other examples of Section 3 of \cite{Izu1} are not of this type.

\section{Entropy of canonical endomorphisms associated to multiplicative unitaries}

Consider again the endomorphism $\rho$ associated to the unitary defined in \eqref{Izunit} via a symmetric duality bracket on a
finite abelian group $G$  discussed in the last section. Let $\tau$ be the faithful trace on $\UHF$ and let $\phi=\tau \circ E$,
where $E:\Cun \to \UHF$ is the canonical conditional expectation (given by integrating the gauge action). It follows from Lemma
2.1 of \cite{Longosub} that the endomorphism $\rho$ preserves $\phi$, so also extends to an endomorphism of
$\mlg:=\pi_{\phi}(\Cun)''$, where $\pi_{\phi}$ denotes the GNS representation with respect to $\phi$ (we will denote the
extension by $\wt{\rho}$). It follows from the easily checked condition in Corollary 4.3 of \cite{ConP} that $\wt{\rho}$ is
irreducible, i.e.\ $\wt{\rho}(\mlg)'\cap \mlg =\bc I_{\mlg}$.

As discussed in Section 2, in \cite{Izu1} it is observed (as a consequence of results in \cite{Longo} and \cite{Wat}) that $\rho$
is a restriction of a `square root' of a canonical endomorphism of $III_{\frac{1}{N}}$-factor generated by $\Cun$ in the GNS
representation with respect to the state $\phi$. It follows from Proposition 2.5 in \cite{Izu1} that if we consider the
conditional expectation $E_{\rho}:\Cun \to \rho(\Cun)$ defined by
\[ E_{\rho}(x) = \rho (S_e^* \rho(x) S_e),\]
where $e\in G$ is the neutral element, then $\In \, E_{\rho} = N$ and it is a minimal index in the sense of Kosaki
(\cite{Kosaki}). Moreover the endomorphism $\gamma=\rho^2$ is related to the left regular representation of the group $G$.
Indeed, it is easy to check that
\begin{equation}\gamma^2 = \Phi \circ \gamma,\label{Cuntzformula}\end{equation}
which due to Proposition 2.1 of \cite{Cuntzmunit} is equivalent to the fact that the unitary associated with $\gamma$ is a
product of a multiplicative unitary on $\bc^n \ot \bc^n$ (\cite{Baaj}) and the flip unitary $F=\sum_{g,h \in G} S_h S_g S_h^*
S_g^*$.

As suggested by the above discussion, there is a connection between the entropy computation in Section \ref{Izumi}, index values
and the fact that $\rho^2$ is related to the left regular representation of a finite group. Indeed,
Theorem \ref{Izumient} may be viewed as a special
instance of a general entropy result related to interaction between finite-dimensional Kac algebras and index for subfactors
stated in the following theorem.

\begin{tw}\label{genthm}
Let $V$ be an irreducible multiplicative unitary on $\Hil \ot \Hil$, where $\Hil$ is an $N$-dimensional Hilbert space; view it as
a matrix in $M_N \ot M_N$ and further via the usual isomorphism $M_N\ot M_N \approx \UHF^2\subset \Cun$ as a unitary in $\Cun$.
Let $F$ be the  flip unitary in $\UHF^2$. The topological entropy of the endomorphism of $\Cun$ associated with $VF$ is equal to
$\log N$.
\end{tw}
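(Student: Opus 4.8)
The plan is to establish $\hte(\rho_{VF})\le\log N$ and $\hte(\rho_{VF})\ge\log N$ separately; write $\gamma:=\rho_{VF}$ throughout. The upper bound is immediate: $V$ and $F$ both lie in $\UHF^{2}\subseteq\UHF$, hence so does $VF$, and Theorem~2.2 of \cite{LMP} (applicable to any endomorphism induced by a unitary in $\UHF^{2}$) gives $\hte(\gamma)\le\log N$. The whole difficulty is the matching lower bound, and here — by contrast with Section~\ref{Izumi} — one cannot hope to read it off from a classical transformation of an invariant commutative subalgebra: for a multiplicative unitary not of group type $\gamma$ need not be a permutation endomorphism, and the finite-dimensional $C^{*}$-bialgebra attached to $V$ by the theory of \cite{Baaj} need not contain an $N$-dimensional abelian $\ast$-subalgebra, so there is in general no $\gamma$-invariant masa carrying entropy $\log N$.

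For the lower bound I would pass to the noncommutative variational inequality. Let $E\colon\Cun\to\UHF$ be the canonical conditional expectation, $\tau$ the trace on $\UHF$, and $\phi=\tau\circ E$; as recalled just before the statement, $\phi$ is $\gamma$-invariant (Lemma~2.1 of \cite{Longosub}). Voiculescu's comparison of topological entropy with the Connes--Narnhofer--Thirring entropy $h_{\phi}$ of an invariant state (see \cite{Voic} and \cite{book}) gives $\hte(\gamma)\ge h_{\phi}(\gamma)$. Since $VF\in\UHF$, the subalgebra $\UHF$, hence its weak closure $\pi_{\phi}(\UHF)''$, is $\gamma$-invariant, with $\phi$ restricting to $\tau$ there; by monotonicity of $h$ under restriction to an invariant subalgebra, $h_{\phi}(\gamma)\ge h_{\tau}(\gamma|_{\UHF})$. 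It therefore suffices to prove $h_{\tau}(\gamma|_{\UHF})\ge\log N$.

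To this end, identify $\UHF$ with $\bigotimes_{n\ge1}M_{N}$, so that $\UHF^{k}$ is generated by the first $k$ tensor factors. From the identity $\Phi(u)S_{i}=S_{i}u$ one sees that $\rho_{u}(\UHF^{k})\subseteq\UHF^{k+1}$ for $u\in\UHF^{2}$; thus $\gamma|_{\UHF}$ is a ``twisted'' (skew-product-type) version of the noncommutative Bernoulli shift $\Phi|_{\UHF}$, the latter having entropy $h_{\tau}(\Phi|_{\UHF})=\log N$. The key claim — and the place where irreducibility of the multiplicative unitary $V$ enters in an essential way — is that this twist does not degenerate, namely
\[
\UHF^{1}\vee\gamma(\UHF^{1})=\UHF^{2},
\]
equivalently that $M_{N}^{(1)}$ together with $(VF)\,M_{N}^{(1)}\,(VF)^{*}$ generate $M_{N}^{(1)}\otimes M_{N}^{(2)}$: irreducibility of $V$ is precisely the non-degeneracy preventing $(VF)\,M_{N}^{(1)}\,(VF)^{*}$ from being absorbed into a proper subalgebra (when $V$ arises from a finite abelian group this reduces to the explicit computation behind Theorem~\ref{Izumient}). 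Applying $\gamma$ repeatedly and using $\rho_{u}(\UHF^{k})\subseteq\UHF^{k+1}$ then yields $\UHF^{1}\vee\gamma(\UHF^{1})\vee\cdots\vee\gamma^{n-1}(\UHF^{1})=\UHF^{n}$ for every $n$ (up to a bounded index shift, irrelevant in the limit). Since $\UHF^{n}\cong M_{N^{n}}$ carries the tracial state and $\gamma^{k}(\UHF^{1})\subseteq\UHF^{k+1}$, the standard generating-family estimates used to compute the entropy of noncommutative Bernoulli (and skew-product) shifts give
\[
h_{\tau}(\gamma|_{\UHF})\ \ge\ h_{\tau}\bigl(\gamma|_{\UHF},\,\UHF^{1}\bigr)\ =\ \lim_{n\to\infty}\tfrac1n\log N^{n}\ =\ \log N,
\]
and combined with the upper bound this yields $\hte(\rho_{VF})=\log N$.

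I expect the main obstacle to be the generating identity $\UHF^{1}\vee\gamma(\UHF^{1})=\UHF^{2}$, and in particular isolating exactly how irreducibility of $V$ forces it; once that is available the Connes--Narnhofer--Thirring computation and the passage back to $\hte$ are routine, with Theorem~\ref{Izumient} and its invariant-masa argument serving as a consistency check. A self-contained alternative, avoiding the variational inequality, would be to bound from below the completely positive approximation ranks of $\{S_{i},S_{i}^{*}\}_{i}\cup\gamma(\{S_{i},S_{i}^{*}\}_{i})\cup\cdots\cup\gamma^{n-1}(\{S_{i},S_{i}^{*}\}_{i})$ directly, since these sets generate progressively larger matrix subalgebras of $\Cun$ and so force the ranks to grow like $N^{n}$; but controlling the approximation constants along that route looks more delicate than invoking $\hte(\gamma)\ge h_{\phi}(\gamma)$.
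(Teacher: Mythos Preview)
Your overall architecture --- upper bound from Theorem~2.2 of \cite{LMP}, lower bound via the variational principle $\hte(\gamma)\ge h_{\phi}(\gamma)\ge h_{\tau}(\gamma|_{\UHF})$ after restricting to the gauge-invariant part --- coincides with the paper's.  The gap is in the last step, where you compute $h_{\tau}(\gamma|_{\UHF})$.

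The displayed equality $h_{\tau}(\gamma|_{\UHF};\UHF^{1})=\lim_{n}\frac{1}{n}\log N^{n}$ is not what CNT entropy gives you.  The multi-subalgebra functional $H_{\tau}\bigl(\UHF^{1},\gamma(\UHF^{1}),\ldots,\gamma^{n-1}(\UHF^{1})\bigr)$ is \emph{not} the logarithm of the dimension of the algebra those subalgebras generate; knowing that $\bigvee_{k<n}\gamma^{k}(\UHF^{1})=\UHF^{n}\cong M_{N^{n}}$ only yields the upper bound $H_{\tau}(\ldots)\le S(\tau|_{\UHF^{n}})=n\log N$, which is the wrong direction.  The Bernoulli computation you are modelling works because the iterates $\Phi^{k}(\UHF^{1})$ land in \emph{pairwise distinct} tensor factors and are $\tau$-independent, and it is that independence (not mere generation) that forces $H_{\tau}=n\log N$.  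Here $\gamma^{k}(\UHF^{1})=u_{k}\,\UHF^{1}\,u_{k}^{*}$ with $u_{k}=VF\cdot\Phi(VF)\cdots\Phi^{k-1}(VF)\in\UHF^{k+1}$, so the iterates are smeared across several tensor legs and there is no independence to invoke.  (There are subsidiary issues too: the generating identity $\UHF^{1}\vee\gamma(\UHF^{1})=\UHF^{2}$ amounts to $(1\otimes M_{N})\cap V(M_{N}\otimes1)V^{*}=\bc$, which is not the definition of irreducibility of a multiplicative unitary and would need its own proof; and your induction to $\UHF^{n}$ requires $\UHF^{1}\vee\gamma(\UHF^{k})=\UHF^{k+1}$, a genuinely stronger statement than the $k=1$ case.)

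The paper obtains the lower bound by a completely different mechanism rooted in subfactor theory.  One extends $\gamma$ to $\mlg=\pi_{\phi}(\Cun)''$; irreducibility of $V$ makes $\wt{\gamma}$ the canonical endomorphism of an irreducible inclusion $\nlg\subset\mlg$ with $\In(\wt{\gamma})=N^{2}$ (\cite{Longosub}).  Intersecting with the gauge-invariant part produces a type~II$_{1}$ inclusion $\nlg^{(0)}\subset\vnUHF$ of Jones index $N$ (\cite{Choda}), and $\wt{\gamma}|_{\vnUHF}$ is identified with the Ocneanu canonical shift of that inclusion.  After verifying extremality (from irreducibility) and strong amenability (finite depth, hence \cite{Popa} applies on the hyperfinite $\vnUHF$), Hiai's theorem \cite{Hiai} gives $h_{\phi}(\wt{\gamma}|_{\vnUHF})=\log[\vnUHF:\nlg^{(0)}]=\log N$.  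That index computation, not a dimension count of generated subalgebras, is where the $\log N$ actually comes from.
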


\begin{proof}
Denote $R=VF$, $\gamma:=\rho_{R}$. As $R\in \UHF^2$, Theorem 2.2 of \cite{LMP} implies that $\hte(\gamma) \leq \log N$.

As mentioned above, Lemma 2.1 of \cite{Longosub} implies that $\gamma$ preserves $\phi$, so $\gamma$ extends to an endomorphism
of $\mlg:=\pi_{\phi}(\Cun)''$, denoted by $\wt{\gamma}$. Irreducibility of $V$ implies that $\wt{\gamma}$ is a canonical
endomorphism for an irreducible inclusion of factors $\nlg \subset \mlg$ (Corollary 4.3, \cite{Longosub}), where $\nlg$ is the
fixed point algebra for the coaction associated to $V$ (\cite{Longosub}, \cite{Cuntzmunit}). By Proposition 3.1 in
\cite{Longosub} $\In(\wt{\gamma})=N^2$.

 Let $\vnUHF:=\pi_{\phi}(\UHF)''= \pi_{\phi}\left(\bigcup_{n \in \bn} \UHF^n\right)''$.
 Due to Proposition 4.7 in \cite{Longosub}, $\vnUHF$  is equal to the strong closure of the algebra $\bigcup_{n\in \bn} \wt{\gamma}^n(\mlg)'\cap\mlg.$
 As $V\in \UHF$, the endomorphism
$\wt{\gamma}$ leaves $\vnUHF$ invariant. By Theorem 3.2.2 (ii) in \cite{book} we have $\tu{h}_{\phi} (\wt{\gamma}) =
\tu{h}_{\phi} (\gamma)$, where $h_{\omega}(\alpha)$ denotes the CNT entropy of an endomorphism $\alpha$ preserving a state
$\omega$ (see \cite{book} for the precise definitions -- although Theorem 3.2.2 (ii) is stated for the automorphisms, its proof
is valid also when $\rho$ is just an endomorphism). As  by Theorem 6.2.2 (ii)  in \cite{book} $\hte(\gamma) \geq \tu{h}_{\phi}
(\gamma)$, to finish the proof it suffices to show that $\tu{h}_{\phi} (\wt{\gamma}) \geq \log N$. Note that as the conditional
expectation $E$ extends to a $\phi$-preserving normal conditional expectation $\wt{E}:\mlg \to \vnUHF$, by Theorem 3.2.2 (v) in
\cite{book} we have $\tu{h}_{\phi} (\wt{\gamma})\geq \tu{h}_{\phi} (\wt{\gamma}|_{\vnUHF})$.

Let $\nlg^{(0)}:=\nlg \cap \vnUHF$. Note that because both $\gamma$ and canonical shift $\Phi$ commute with the gauge action, we
have $\nlg^{(0)}= \wt{E}(\nlg)$. As the canonical expectation $E_{\gamma}:\mlg \to \wt{\gamma}(\mlg)$ discussed in Lemma 4.6 in
\cite{Longosub} preserves the trace on $\vnUHF$, due to Lemma 7.3.5 in \cite{Choda} we see that $\nlg^{(0)}$ is a subfactor of
$\vnUHF$; Proposition 7.3.6 of the same paper\footnote{Note that in Lemma 7.2.1 in \cite{Choda} the operator $V$ should be
defined as $V=\frac{1}{\lambda} \gamma(e)fe$ and is only a \emph{partial} isometry -- this does not affect further reasoning and
the main results of that paper remain valid.} implies $[\mlg^{(0)}, \nlg^{(0)}] = (\In(\wt{\gamma}))^{\frac{1}{2}}=N$. Further we
can use the observation in Theorem 7.3.7 of \cite{Choda} (see also Corollary 4.3 in \cite{Longosub})
 that $\wt{\gamma}|_{\vnUHF}$ is conjugate to
the Ocneanu's canonical shift associated with the inclusion $\nlg^{(0)} \subset \mlg^{(0)}$; note that now we are in the
framework of finite factor inclusions. Corollary 4.6 of \cite{Hiai} gives then $\tu{h}_{\phi} (\wt{\gamma}|_{\vnUHF})=
\log([\mlg^{(0)}, \nlg^{(0)}])= \log N$, provided the inclusion $\nlg^{(0)} \subset \mlg^{(0)}$ is extremal and strongly
amenable. Extremality follows from the equality  $(\nlg^{(0)}) ' \cap \mlg^{(0)} = \bc I_{\mlg}$, which itself is a consequence
of Theorem 6.6 in \cite{dimension} and Proposition 3.2 of \cite{Longosub}. By Theorem 1 in \cite{Popa} strong amenability of the
inclusion in question will follow if we can only show that it has finite depth, as $\vnUHF$ is a hyperfinite factor. As the Jones
tunnel corresponding to the inclusion $\nlg^{(0)} \subset \mlg^{(0)}$ is given by
\[ \cdots \subset \gamma(\nlg^{(0)}) \subset \gamma(\vnUHF) \subset \nlg^{(0)} \subset \vnUHF \subset \cdots,\]
it suffices to show that $\gamma(\nlg^{(0)})' \cap \vnUHF$ is a factor. The earlier observation that $(\nlg^{(0)}) ' \cap
\mlg^{(0)} = \bc I_{\mlg}$ and a suitably adapted argument from Corollary 3.3 of \cite{Longosub} delivers precisely that (one can
show that $\gamma(\nlg^{(0)})' \cap \vnUHF=\UHF^1$).
\end{proof}

Conceptually the reason for which we get $\hte(\rho_{VF})= \log N$ is that $\rho_{VF}$ is indeed a map closely related to the
canonical shift on $\Cun$, as is suggested by the formula \eqref{Cuntzformula}; several instances of such analogies can be found
in \cite{dimension}.

\vspace*{0.5cm}

\noindent \emph{Acknowledgment.} The work on this note was started during a visit of the author to University of Tokyo in
October-November 2009 funded by a JSPS Short Term Postdoctoral Fellowship.

\end{document}